\documentclass[12pt, a4paper]{article}

\usepackage[english]{babel}
\usepackage[width=478.006296pt, height=676.037472pt]{geometry}
\usepackage{parskip}
\usepackage{biblatex}
\usepackage{amsmath,amssymb,amsthm}
\usepackage{graphicx}

\setcounter{section}{0}

\newtheorem{theorem}{Theorem}[section]

\theoremstyle{definition}
\newtheorem{definition}[theorem]{Definition}
\newtheorem{example}[theorem]{Example}

\theoremstyle{remark}
\newtheorem{remark}[theorem]{Remark}

\newcommand{\N}{\mathbb N}
\newcommand{\ord}{\mathbf{Ord}}
\newcommand{\op}[2]{\langle#1, #2\rangle}

\addbibresource{ref.bib}

\title{A higher arithmetic on the ordinals}
\author{Adrian Ducourtial}

\date{July 5\textsuperscript{th}, 2026}

\begin{document}

\maketitle

\section{Introduction}

The concept of iteration is central to arithmetic. From addition on the natural numbers $\N = \{0, 1, 2, \dots\}$, one may define multiplication as iterated addition, and exponentiation as iterated multiplication. Pictorally, one may write

\begin{equation}
    a \cdot b = \underbrace{a+\cdots+a}_{b}, \qquad\qquad a^b = \underbrace{a\ \cdot\ \cdots\ \cdot\ a}_{b}.
\end{equation}

The development of arithmetic can be extended further in many ways. In this paper, we focus on two such extensions, and ask whether they can be combined into a common theory.

On the one hand, further operations may be defined. Following Goodstein \cite{Goo1947}, we continue the scheme alluded to in (1) by making it more precise, providing such things as iterated exponentiation, and at length giving an infinite sequence of so-called \emph{hyperoperations}. We define and motivate these in Section 2.

On the other hand, further numbers may be introduced. We consider the \emph{ordinal numbers} of Cantor \cite{Can1883}, serving as a platform for transfinite induction. We assume that the reader is familiar with this number system; in particular, the operations of natural arithmetic above are embedded and generalized in ordinal arithmetic. Inspired by Section 2, we attempt to define higher operations on the ordinals in Section 3. Nonetheless, we learn that the straightforward approaches do not yield satisfactory results.

From this experience, we are inclined to take a different perspective on the situation, leading us to propose a new sequence of higher ordinal operations in Section 4. In this final section, we also show that the traditional arithmetic on the ordinals, as well as the hyperoperations on the naturals, are both embedded in the sequence, thereby providing a common generalization of the two arithmetics.

Looking outward, we are not the first to consider such a sequence. Finsler \cite{Fin1951} and Doner and Tarski \cite{DoTa1969} both study mutually similar sequences of ordinal operations, capturing a form of higher arithmetic iteration. On the finite ordinals, however, neither sequence agrees with the corresponding hyperoperations on the naturals, and the latter sequence does not replicate exponentiation exactly (the former takes it for granted). Indeed, it appears that their research is unrelated to the one of Goodstein, so their motivations are arguably different from ours.

Although the present operations embed both the hyper- and ordinal-arithmetic ones, we have yet to study them in their own right. First, it would be relevant to assess their monotonicity and continuity, especially compared to the usual arithmetic on the ordinals. Second, following \cite{DoTa1969}, one could determine the \emph{main numbers} of our operations, i.e., those ordinals whose initial segments (of all smaller ordinals) are closed under the given operation. In relation to the latter, we conjecture that the $\alpha$\textsuperscript{th} infinite main number of the operation $\op--_n$ for $n \in \omega$ is equal to $\op\omega{\omega^\alpha}_{n+1}$. In any case, both lines of inquiry would clarify the nature and coherency of our unified arithmetic.

\section{Hyperoperations}

As is customary in set theory, we denote by $\omega$ the set of nonnegative integers. Let $* : \omega\times\omega \to \omega$ be a binary operation with right identity element $e \in \omega$. We consider a derived operation $*' : \omega\times\omega \to \omega$, given by

\begin{align*}
    a *' 0\phantom{(b+1)}\kern-.25em &=\ e,\\
    a *' (b+1)\phantom{0}\kern-.25em &=\ a * (a *' b).
\end{align*}

For some $a\in\omega$, we effectively apply the map $a * - : \omega \to \omega$ repeatedly to the right identity element $e$. Importantly, the second argument $b$ is used to \emph{count} the depth of the iteration. For $a\in\omega$, we observe that

\begin{align*}
    a *' 1 &= a * (a *' 0) \\
           &= a * e \\
           &= a,
\end{align*}

and hence, $e' = 1 \in \omega$ is a right identity element of $*'$. Therefore, we may apply the same construction as above and derive another operation $*''$, which, by the same argument, would also have $e'' = 1$ as a right identity element. Therefore, this derivation of higher operations can be continued indefinitely.

More concretely, we now let $* = +$ be the usual addition on the naturals. We have that $0\in\omega$ is the only right identity element of addition, so we may apply the construction scheme above repeatedly. Writing $a+b = [a,b]_0$ as the initial operation, we let successive integers denote the higher operations so obtained, amounting to an infinite sequence of binary operations. We formulate this as a recursive definition.

\begin{definition}\label{hypop}
    For all $n \in \omega$, the operation $[-,-]_n : \omega\times\omega \to \omega$ is given by

    \begin{align*}
        [a, b]_0 &= a+b,\\
        [a, 0]_1 &= 0,\\
        [a, 0]_n &= 1 \quad \text{for } n \geq 2,\\
        [a, b+1]_{n+1} &= [a, [a, b]_{n+1}]_n.
    \end{align*}

\end{definition}

We note that the right identity element of some operation $[-,-]_n$ is determined by the index of the higher operation $[-,-]_{n+1}$. After addition, the next couple of terms are some familiar operations.

\begin{theorem}
    For all $a, b \in\omega$, the following hold.
    \begin{enumerate}
        \item[\textnormal{(i)}] $[a,b]_1 = a\cdot b$
        \item[\textnormal{(ii)}] $[a,b]_2 = a^b$
    \end{enumerate}
\end{theorem}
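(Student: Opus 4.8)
The plan is to prove both identities by induction on $b$, with $a$ fixed but arbitrary throughout; I would establish (i) first and then invoke it to obtain (ii). In each case the inductive variable is precisely the counting argument $b$ that tracks the iteration depth in Definition \ref{hypop}.

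For (i), the base case is immediate from the definition: $[a,0]_2 = 0 = a\cdot 0$. For the inductive step I would instantiate the defining recursion $[a,b+1]_{n+1} = [a,[a,b]_{n+1}]_n$ at $n=1$, which yields $[a,b+1]_2 = [a,[a,b]_2]_1 = a + [a,b]_2$. Applying the induction hypothesis $[a,b]_2 = a\cdot b$ and then distributivity together with commutativity of natural addition, this becomes $a + a\cdot b = a\cdot b + a = a\cdot(b+1)$, closing the induction.

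For (ii) the argument has the same shape one level higher, and crucially relies on part (i). The base case $[a,0]_3 = 1 = a^0$ holds by the $n\geq 3$ clause of Definition \ref{hypop}, in agreement with the convention $a^0 = 1$ (including $0^0 = 1$, which matches $[0,0]_3 = 1$). For the step I would instantiate the recursion at $n=2$ to get $[a,b+1]_3 = [a,[a,b]_3]_2$, rewrite the outer operation via part (i) as $a\cdot[a,b]_3$, and finally apply the induction hypothesis $[a,b]_3 = a^b$ to obtain $a\cdot a^b = a^{b+1}$.

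I do not expect any genuine obstacle here: both statements reduce to direct inductions on $b$, and the only matters requiring care are bookkeeping ones — choosing the correct index ($n=1$ for (i), $n=2$ for (ii)) when unfolding the recursion, checking the base cases against the split definition at $b=0$, and remembering that (ii) is not self-contained but depends on the multiplicative identity established in (i). The one subtlety worth flagging is that the recursion unfolds from the outside, feeding the previously computed value into the lower-index operation, so each inductive step strips a single unit off $b$ while that same argument records the depth of the nested iteration.
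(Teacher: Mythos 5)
Your proposal is correct and follows essentially the same route as the paper's own proof: induction on $b$ for each part, unfolding the recursion at $n=1$ for (i) and at $n=2$ for (ii), and using (i) as a lemma inside the inductive step of (ii). The only cosmetic difference is that you make the appeal to commutativity in $a + a\cdot b = a\cdot(b+1)$ explicit, which the paper leaves implicit.
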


\begin{proof}
    Let $a\in\omega$ be given. We prove each statement in order, by simple induction on $b$.
    \begin{enumerate}
        \item[(i)] For $b=0$, we have $[a,0]_1 = 0 = a\cdot 0$. For the inductive step, suppose $[a,b]_1 = a\cdot b$ for some $b\in\omega$. We have
        \begin{align*}
            [a, b+1]_1 &= [a, [a,b]_1]_0 && \\
                       &= a + [a,b]_1 && \text{by definition} \\
                       &= a + (a\cdot b) && \text{by inductive hypothesis} \\
                       &= a \cdot (b+1).
        \end{align*}
        \item[(ii)] For $b=0$, we have $[a,0]_2 = 1 = a^0$ (in particular, $0^0 = 1$). For the inductive step, suppose $[a,b]_2 = a^b$ for some $b\in\omega$. We have
        \begin{align*}
            [a, b+1]_2 &= [a, [a,b]_2]_1 && \\
                       &= a \cdot [a,b]_2 && \text{by (i)} \\
                       &= a \cdot a^b && \text{by inductive hypothesis} \\
                       &= a^{b+1}.
        \end{align*}
    \end{enumerate}
\end{proof}

For $n = 3$, the operation $[-,-]_3$ is a form of iterated exponentiation, where we iterate along the second argument (i.e., the exponent). As exponentiation is non-associative, we may define an alternative sequence of operations in terms of iteration along the first argument. This would give

\[[a, b+1]_{n+1} = [[a, b]_{n+1}, a]_n\]

in the general case. Therefore, we expect the resulting operations to have \emph{left} identity elements instead. The first operation, addition, has $0\in\omega$ as a left identity element, and it is possible to recuperate multiplication and exponentiation by this route (the former has 1 as a left identity element). Continuing further in this manner, however, is not possible.

\begin{theorem}
    Exponentiation has no left identity element.
\end{theorem}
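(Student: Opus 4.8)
The plan is to assume, toward a contradiction, that some $e \in \N$ is a left identity element for exponentiation, so that $e^a = a$ holds for every $a \in \N$. The entire argument then reduces to evaluating this equation at a couple of small values of $a$, since the constraint $e^a = a$ is far too rigid to be met by any single fixed base $e$. No induction is needed; a finite amount of witnessing suffices.

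First I would substitute $a = 1$ to pin down the only possible candidate. Since $e^1 = e$, the left-identity equation forces $e = 1$. Having isolated this candidate, I would then substitute $a = 2$ and compute $1^2 = 1 \neq 2$, contradicting $e^a = a$. This already establishes the claim. Alternatively, and even more directly, I would substitute $a = 0$: the left-identity equation demands $e^0 = 0$, whereas $e^0 = 1$ for every $e$ under the convention $0^0 = 1$ adopted in the preceding proof. This yields the contradiction $1 = 0$ in a single step, uniformly in $e$.

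The statement presents essentially no technical obstacle; the only point requiring care is the choice of test values, and, in the $a = 0$ route, keeping consistency with the $0^0 = 1$ convention. What matters here is the conceptual content rather than the difficulty: the absence of a left identity is precisely what blocks the derivation of a further operation in the first-argument iteration scheme $[a, b+1]_{n+1} = [[a, b]_{n+1}, a]_n$, explaining why that alternative sequence cannot be continued past exponentiation.
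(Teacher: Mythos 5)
Your proposal is correct and takes essentially the same approach as the paper: a proof by contradiction obtained by evaluating the putative identity $e^a = a$ at small values of $a$. The paper simply substitutes $a = 2$ and notes that $2$ is not a perfect square, whereas you first pin down $e = 1$ via $a = 1$ (or use $a = 0$ under the paper's convention $0^0 = 1$); these are negligible variations of the same one-line argument.
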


\begin{proof}
    Suppose, aiming at a contradiction, that there exists an $e\in\omega$ such that $e^a = a$ for all $a\in\omega$. Then, we have $e^2 = 2$, but 2 is not a perfect square.
\end{proof}

Thus, we are compelled to use ``rightward'' iteration. This is a stronger notion of iteration, since it applies to the depth of the lower iteration itself (the argument $b$).

\section{Ordinals}

We denote the class of ordinals by $\ord$, and its usual well-order by $<$. Since the arithmetic on the ordinals embeds the arithmetic on the naturals, we denote the class functions of the former identically to the operations of the latter (and simply call both operations). Taking ordinal addition for granted, we recall the definitions of ordinal multiplication and exponentiation by transfinite recursion. First, for $\gamma > 0$, the \emph{limit supremum} and \emph{limit infimum} of a $\gamma$-sequence $(\alpha_\xi)_{\xi < \gamma}$ are defined as
\begin{align*}
    \textstyle\limsup_{\xi < \gamma} \alpha_\xi &= \inf {\{\sup {\{\alpha_\xi : \delta \leq \xi < \gamma\}} : \delta < \gamma\}}\\
    \textstyle\liminf_{\xi < \gamma} \alpha_\xi &= \sup {\{\inf {\{\alpha_\xi : \delta \leq \xi < \gamma\}} : \delta < \gamma\}},
\end{align*}
respectively. If these two values coincide, we simply talk of the \emph{limit} of the $\gamma$-sequence, which we denote by $\lim_{\xi < \gamma} \alpha_\xi$.

\begin{definition}
    For $\alpha, \beta\in\ord$, we define $\alpha\cdot\beta$ and $\alpha^\beta$ by transfinite recursion on $\beta$, where
    \begin{align*}
        \alpha \cdot 0 &= 0\\
        \alpha \cdot (\beta+1) &= (\alpha\cdot\beta) + \alpha && \text{for all }\beta\\
        \alpha \cdot \beta &= \textstyle\lim_{\gamma < \beta} {(\alpha\cdot\gamma)} &&\text{for limit\footnotemark}\ \beta
    \end{align*}
    and
        \begin{align*}
        \alpha^0 &= 1\\
        \alpha^{\beta+1} &= \alpha^\beta \cdot \alpha && \text{for all }\beta\\
        \alpha^\beta &= \textstyle\lim_{\gamma < \beta} \alpha^\gamma &&\text{for limit }\beta.
    \end{align*}
\end{definition}

\footnotetext{By convention, we do not regard 0 as a limit ordinal.}

In relation to the higher arithmetic on the natural numbers from Section 1, we now consider extending the arithmetic on the ordinals in a similar fashion, ideally reconstructing the hyperoperations on the finite ordinals. Here, we notice that the above definitions use ``leftward'' iteration in the successor case, which also applies when $\beta > 0$ is finite. Therefore, if we continue this scheme directly, we are met with the same lack of identity elements (as we observed previously), and we do not follow the definition of the hyperoperations anyway.

Thus, as a different approach, we may instead begin with the hyperoperations and extend their domain to the transfinite, adding the limit case above to their definition and using ordinal addition as the first term. Indeed, the right identity elements of natural arithmetic are the same in ordinal arithmetic. Already at multiplication, however, these operations diverge from the usual arithmetic on the ordinals, and are actually rendered trivial when the second argument is infinite. Letting $*$ denote the ordinal version of $[-,-]_2$, we would have, for all $\alpha \in \ord$,
\begin{align*}
    \alpha * (\omega + 1) &= \alpha + (\alpha * \omega) && \text{by definition} \\
                          &= \alpha + (\alpha \cdot \omega) && \text{by the limit case} \\
                          &= \alpha \cdot \omega && \\
                          &= \alpha * \omega && \text{by the limit case,}
\end{align*}
where $\cdot$ is the standard ordinal multiplication. By induction, it can then be shown that $\alpha * \beta = \alpha * \omega$ for all $\beta\geq\omega$. Essentially, it is inappropriate to introduce the operands from the left, since ordinal arithmetic (here, addition) is not commutative in the transfinite case.

Taken together, it seems that neither arithmetic can directly build a bridge over to the other. Therefore, we turn to a different perspective.

\section{Synthesis}

Until now, the operations considered have been defined by recursion on the second argument, which provides the depth of the iteration (whether finite or transfinite). In this section, we instead express the second argument as a sum of smaller terms, whereover the operation \emph{distributes} from the left. For example, let $* = [-,-]_n$ be a hyperoperation for some $n \in \omega$, and let $a, b \in \omega$ be natural numbers. Writing $*' = [-,-]_{n+1}$, we have

\[a *' \underbrace{(1+\cdots+1)}_{b}\ =\ \underbrace{(a *' 1)\ *\ \cdots\ *\ (a *' 1)}_{b},\]

where $*$ associates to the right. (Recalling that $a *' 1 = a$ clarifies the above.) This expresses a weak form of distributivity: Since the hyperoperations are not in general associative (as witnessed by exponentiation), we require the sum to be as ``decomposed'' as possible. Indeed, the natural number 1 cannot be expressed as a sum of strictly smaller naturals.

Inspired by this observation, we now attempt to formulate a similar identity on the ordinal numbers, expressing a sequence of operations $\op--_n : \ord^2 \to \ord$ with $\op\alpha\beta_0 = \alpha + \beta$ as the initial term. As above, we require the second argument, now expressed as an ordinal sum, to be maximally decomposed. Relating to the terms of such a sequence, we recall the following notion.

\begin{definition}
    An ordinal $\gamma > 0$ is called \emph{additive principal} if, for all $\alpha, \beta \in \ord$ with $\alpha, \beta < \gamma$, we have $\alpha+\beta < \gamma$.
\end{definition}

These are precisely the ordinals that cannot be further decomposed in terms of addition.

\begin{theorem}
    An ordinal $\beta > 0$ is additive principal if and only if it is of the form $\beta = \omega^\alpha$ for some $\alpha \in \ord$.
\end{theorem}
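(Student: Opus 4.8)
The plan is to prove the two implications separately, leaning on standard facts about ordinal arithmetic: strict monotonicity of addition in its right argument (and weak monotonicity in its left argument), the left-distributivity $\gamma\cdot(\mu+\nu)=\gamma\cdot\mu+\gamma\cdot\nu$ together with strict monotonicity of multiplication by a fixed positive ordinal, the subtraction theorem (for $\mu\le\beta$ there is a unique $\rho$ with $\mu+\rho=\beta$), and the continuity and strict monotonicity of the map $\gamma\mapsto\omega^\gamma$.

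For the forward direction—that every $\omega^\alpha$ is additive principal—I would argue by transfinite induction on $\alpha$. For $\alpha=0$ we have $\omega^0=1$, and the only ordinal below $1$ is $0$, with $0+0=0<1$. For a successor $\alpha+1$, note $\omega^{\alpha+1}=\omega^\alpha\cdot\omega=\sup\{\omega^\alpha\cdot n : n<\omega\}$; given $\mu,\nu<\omega^{\alpha+1}$, I choose a finite $n$ with $\mu,\nu<\omega^\alpha\cdot n$, so that $\mu+\nu<\omega^\alpha\cdot n+\omega^\alpha\cdot n=\omega^\alpha\cdot 2n<\omega^{\alpha+1}$ by distributivity and monotonicity. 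For a limit $\lambda$, we have $\omega^\lambda=\sup\{\omega^\gamma:\gamma<\lambda\}$; given $\mu,\nu<\omega^\lambda$ there is a single $\gamma<\lambda$ with $\mu,\nu<\omega^\gamma$, and the induction hypothesis gives $\mu+\nu<\omega^\gamma<\omega^\lambda$. The limit case is where the hypothesis is genuinely used.

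For the reverse direction—that every additive principal $\beta$ is a power of $\omega$—I would proceed directly. Since $\gamma\mapsto\omega^\gamma$ is continuous and strictly increasing, the class $\{\gamma:\omega^\gamma\le\beta\}$ is a nonempty bounded initial segment whose supremum is attained, yielding a largest $\alpha$ with $\omega^\alpha\le\beta<\omega^{\alpha+1}$. By the subtraction theorem write $\beta=\omega^\alpha+\rho$. I claim $\rho<\beta$: since $\beta<\omega^{\alpha+1}=\omega^\alpha\cdot\omega$, division gives $\beta=\omega^\alpha\cdot n+\rho'$ with finite $n\ge 1$ and $\rho'<\omega^\alpha$, whence $\omega^\alpha+\beta=\omega^\alpha\cdot(1+n)+\rho'>\beta$; as $\rho\le\beta$ always holds, this forces $\rho<\beta$. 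If $\omega^\alpha=\beta$ we are done. Otherwise $\omega^\alpha<\beta$, so both $\omega^\alpha$ and $\rho$ lie below $\beta$, and additive principality gives $\beta=\omega^\alpha+\rho<\beta$, a contradiction unless $\rho=0$; hence $\beta=\omega^\alpha$.

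The main obstacle is the claim $\rho<\beta$ in the reverse direction, i.e.\ that the leading power $\omega^\alpha$ is genuinely subtractable and not absorbed into $\beta$ under addition; this is precisely where the bound $\beta<\omega^{\alpha+1}$ is essential, and it is the one place the argument relies on the division theorem rather than on elementary monotonicity. An alternative and perhaps cleaner route for this direction is to invoke Cantor normal form: $\beta$ is a power of $\omega$ exactly when its normal form is a single term $\omega^\alpha$ with coefficient $1$, and any other form exhibits a decomposition $\beta=\mu+\nu$ with $0<\mu,\nu<\beta$, contradicting additive principality.
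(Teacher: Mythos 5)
Your proof is correct, but be aware that the paper offers no proof of its own for this theorem: its entire ``proof'' is a one-line appeal to the literature (Sierpi\'nski), citing the result as classic. So the comparison is between your self-contained argument and a pointer to a textbook; what your route buys is precisely self-containedness, at the standard cost of invoking the subtraction and division theorems and the normality (strict monotonicity plus continuity) of $\gamma \mapsto \omega^\gamma$. The details check out: in the forward induction the base and successor cases are handled by left distributivity and monotonicity alone, and the induction hypothesis is used exactly where you say it is, at limit stages; in the reverse direction you correctly isolate the crux --- that the leading power $\omega^\alpha$ is not absorbed, i.e.\ that $\rho < \beta$ in $\beta = \omega^\alpha + \rho$ --- and the division argument giving $\omega^\alpha + \beta > \beta$ settles it soundly. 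Two small remarks. First, the caveat ``a contradiction unless $\rho = 0$'' is unnecessary: under the paper's definition of additive principal, which quantifies over all summands below $\beta$ including $0$, the inequalities $\omega^\alpha < \beta$ and $\rho < \beta$ yield $\omega^\alpha + \rho < \beta$ outright, contradicting $\omega^\alpha + \rho = \beta$ whether or not $\rho = 0$ (and in that case $\rho = 0$ is impossible anyway, since it would force $\omega^\alpha = \beta$). Second, your suggested alternative via Cantor Normal Form, while mathematically fine in isolation, would sit badly in this paper's development: CNF is introduced only \emph{after} this theorem --- the theorem is what motivates it --- and its uniqueness is likewise only cited, so the division-theorem argument in your main text is the right choice here.
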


\begin{proof}
    Classic \cite{Sie1965}.
\end{proof}

In particular, we see that $\omega^0 = 1$ is the only finite additive principal ordinal. For additive principal ordinals $\omega^\alpha, \omega^\beta$, we have $\omega^\alpha + \omega^\beta = \omega^\beta$ just when $\alpha < \beta$. Thus, we may reduce a sum of additive principal ordinals to one in weakly decreasing order, giving the following familiar construct.

\begin{definition}\label{cnf}
    Let $\beta$ be an ordinal. For a natural number $k$ and ordinals $\beta_1 \geq \cdots \geq \beta_k$, we write

    \[\beta \stackrel{\textsc{cnf}}{=} \omega^{\beta_1} + \cdots + \omega^{\beta_k}\]

    in case the two expressions are equal, and call the right-hand side a \emph{Cantor normal form} (CNF) of $\beta$.
\end{definition}

We pull on the central result about these.

\begin{theorem}
    Every ordinal has a unique Cantor normal form.
\end{theorem}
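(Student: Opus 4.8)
The plan is to establish existence and uniqueness separately, both by transfinite induction on $\beta$, where throughout I read a Cantor Normal Form as a sum $\omega^{\beta_1}+\cdots+\omega^{\beta_k}$ whose exponents are \emph{weakly decreasing}, $\beta_1\geq\cdots\geq\beta_k$ — this is the reduced form obtained from the preceding discussion, and only for this form can uniqueness hold. The engine of the whole argument is a two-sided bound: for any weakly decreasing sum one has $\omega^{\beta_1}\leq\omega^{\beta_1}+\cdots+\omega^{\beta_k}<\omega^{\beta_1+1}$. The lower bound is immediate, and the upper bound follows since each term satisfies $\omega^{\beta_i}\leq\omega^{\beta_1}$, whence $\sum_{i=1}^{k}\omega^{\beta_i}\leq\omega^{\beta_1}\cdot k<\omega^{\beta_1}\cdot\omega=\omega^{\beta_1+1}$. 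This shows that the leading exponent of a CNF is forced to be the unique $\alpha$ with $\omega^{\alpha}\leq\beta<\omega^{\alpha+1}$.

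For existence, the base case $\beta=0$ is the empty sum ($k=0$). For $\beta>0$, I would first locate the leading exponent: using strict monotonicity of $\alpha\mapsto\omega^{\alpha}$ (so that $\beta<\omega^{\beta+1}$), the set $\{\alpha:\omega^{\alpha}\leq\beta\}$ is nonempty and bounded above, hence has a maximum $\beta_1$, and maximality gives $\beta<\omega^{\beta_1+1}$. By left subtraction in ordinal addition there is a unique $\rho$ with $\beta=\omega^{\beta_1}+\rho$, and $\rho<\beta$ because $\omega^{\beta_1}>0$. Applying the induction hypothesis to $\rho$ yields a CNF $\rho=\omega^{\beta_2}+\cdots+\omega^{\beta_k}$; to prepend $\omega^{\beta_1}$ legitimately I must check $\beta_2\leq\beta_1$, which holds since $\omega^{\beta_2}\leq\rho\leq\beta<\omega^{\beta_1+1}$ forces $\beta_2<\beta_1+1$ by monotonicity. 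This produces a weakly decreasing CNF of $\beta$.

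For uniqueness, suppose $\omega^{\beta_1}+\cdots+\omega^{\beta_k}=\omega^{\gamma_1}+\cdots+\omega^{\gamma_m}$ are two CNFs of the same $\beta$. By the bound above, both leading exponents equal the unique $\alpha$ with $\omega^{\alpha}\leq\beta<\omega^{\alpha+1}$, so $\beta_1=\gamma_1$. Left-cancellation of ordinal addition then strips the common term $\omega^{\beta_1}$, leaving two CNFs of a strictly smaller ordinal, to which the induction hypothesis applies and yields termwise equality (and $k=m$). I expect the main obstacle to be nothing more than marshalling the structural facts that feed the bound — strict monotonicity of $\omega^{(-)}$, the absorption identity $\omega^{\gamma}+\omega^{\delta}=\omega^{\delta}$ for $\gamma<\delta$ (from the characterization of additive principal ordinals already proved), and left subtraction and left cancellation for ordinal addition — all of which are standard and which I would invoke rather than reprove.
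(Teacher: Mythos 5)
The paper itself gives no argument for this theorem --- its ``proof'' is the citation to Sierpi\'nski --- so your proposal supplies an argument the paper omits. The strategy you chose is the standard textbook route: the two-sided bound $\omega^{\beta_1}\leq\beta<\omega^{\beta_1+1}$ pins down the leading exponent, left subtraction drives existence, left cancellation drives uniqueness. Your insistence on weakly decreasing exponents is also the correct reading: Definition~\ref{cnf} as literally stated omits that condition, and without it uniqueness is false (e.g.\ $\omega^{0}+\omega^{1}=\omega^{1}$), so your opening remark fixes a genuine imprecision in the paper.

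There is, however, one justification in your existence step that is wrong as stated: you claim $\rho<\beta$ ``because $\omega^{\beta_1}>0$.'' Adding a positive ordinal on the \emph{left} is not strictly increasing --- $1+\omega=\omega$ --- so from $\beta=\sigma+\rho$ with $\sigma>0$ one cannot conclude $\rho<\beta$; indeed, taking the non-maximal exponent $\beta_1=0$ for $\beta=\omega^2$ gives exactly $\rho=\beta$. What saves the step is the maximality of $\beta_1$, i.e.\ your own upper bound: if $\rho=\beta$, then $\omega^{\beta_1}+\beta=\beta$, and iterating this absorption gives $\omega^{\beta_1}\cdot n+\beta=\beta$ for every $n\in\N$, hence $\omega^{\beta_1+1}=\omega^{\beta_1}\cdot\omega\leq\beta$, contradicting $\beta<\omega^{\beta_1+1}$. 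The same repair is needed in your uniqueness half, where ``two CNFs of a strictly smaller ordinal'' is again not automatic: there $\rho<\omega^{\beta_1+1}$ holds by your bound applied to the tail, so the absorption $\omega^{\beta_1}+\rho=\rho$ is impossible and $\rho<\beta$ follows. A second, smaller gap: a nonempty set of ordinals bounded above has a supremum, not automatically a maximum; that $\sup\{\alpha:\omega^{\alpha}\leq\beta\}$ is attained requires the continuity of $\alpha\mapsto\omega^{\alpha}$ at limit stages, which you should invoke explicitly alongside its monotonicity. With these two repairs your proof is correct.
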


\begin{proof}
    Also classic \cite{Sie1965}.
\end{proof}

\begin{example}
    An additive principal ordinal $\beta = \omega^\alpha$ has $k = 1$ and $\beta_1 = \alpha$ for its Cantor normal form. A finite ordinal $\beta > 0$ has $k = \beta$ and $\beta_1, \ldots, \beta_k = 0$. The ordinal $\beta = 0$ has $k = 0$ terms in its CNF, corresponding to the empty sum.
\end{example}

Returning to our desired operations, we can thus formulate their distributive identities as follows. Let $* = \op--_n$ and $*' = \op--_{n+1}$ for some $n\in\omega$. For $0<k\in\omega$, $\beta_1, \ldots, \beta_k \in \ord$ and ordinals $\alpha, \beta$ such that $\beta \stackrel{\textsc{cnf}}{=} \omega^{\beta_1} + \cdots + \omega^{\beta_k}$, we would like to have

\[\alpha *' (\omega^{\beta_1} + \cdots + \omega^{\beta_k})\ =\ (\alpha *' \omega^{\beta_1}) * \cdots * (\alpha *' \omega^{\beta_k}),\]

with $*$ associating to the right. As a definition, this is most meaningful when $k \geq 2$, such that $\beta$ is indeed written in smaller terms. For $k = 1$ and $\beta_1 > 0$, we have that $\beta$ is a limit ordinal; here, we let $\alpha *' \beta$ be the limit of the $\alpha *' \gamma$ for $\gamma < \beta$, in case this limit exists. When $\beta_1 = 0$, however, this would be inappropriate, since $\beta = 1$ would not be a limit ordinal; here, we instead let $\alpha *' 1 = \alpha$ as expected. Finally, when $k = 0$, we have $\beta = 0$, and we let $\alpha *' 0$ be the right identity element of $*$.

To collect this into a recursive definition, we only isolate the first term of the CNF of $\beta$. For concision, we write
\begin{align*}
    \beta_H &= \omega^{\beta_1}, \\
    \beta_T &= \phantom{\omega^{\beta_1} +} \omega^{\beta_2} + \cdots + \omega^{\beta_k}
\end{align*}
for $k \geq 1$, and call these the \emph{head} and \emph{tail} of $\beta$, respectively.

\begin{definition}\label{def:fold}
    Let $* : \ord^2 \to \ord$ be an operation with right identity element $\varepsilon$. The \emph{fold} of $*$ is the operation $*'$ given as follows. For $\alpha, \beta \in \ord$, the value $\alpha *' \beta$ is defined by transfinite recursion on $\beta$, where
    \begin{align*}
        \alpha *' 0 &= \varepsilon\\
        \alpha *' (1+\beta_T) &= \alpha * (\alpha *' \beta_T) &&\text{for }\beta > 0\text{ with }\beta_H = 1\\
        \alpha *' (\beta_H + \beta_T) &= \lim_{\gamma<\beta_H} (\alpha *' \gamma) * (\alpha *' \beta_T) && \text{for }\beta > 0\text{ with }\beta_H > 1.
    \end{align*}
\end{definition}

We then define $\op--_n : \ord^2 \to \ord$ by recursion on $n \in \omega$, where $\op--_0$ is ordinal addition and $\op--_{n+1}$ is the fold of $\op--_n$ for $n \in \omega.$ While 0 is the unique right identity element of $\op--_0$, it can be shown that $1$ is the unique right identity element of $\op--_n$ for all $0 < n \in \omega$.

\begin{remark}
    There is an edge case where Definition~\ref{def:fold} does not provide a value for $\op\alpha\beta_n$, namely, when $n \geq 3$, $\alpha = 0$, and $\beta \geq \omega$. This is because, for $n \geq 3$, $\op0\beta_n$ alternates between 1 and 0 as $\beta < \omega$ increases, so that the resulting $\omega$-sequence does not have a limit. In order to obtain a total operation, we find that setting $\op0\beta_n = 0$ for all additive principal $\beta > 1$ gives the most natural continuation of the course-of-values of $\op0-_n$ for $n \geq 3$; this amounts to amending Definition~\ref{def:fold} by taking limit infima instead of limits.
\end{remark}

Within this sequence, we now locate the hyperoperations of Section 2 and the usual ordinal arithmetic of Section 3, showing that it provides, at least on the surface, a common generalization of the two systems.

Firstly, focusing on the finite ordinals, we establish a correspondence between the hyperoperations $[-,-]_n$ and the above $\op--_n$.

\begin{theorem}
    For all $n\in\omega$, $a, b \in \omega$, we have $\langle a, b \rangle_n = [a, b]_n$.
\end{theorem}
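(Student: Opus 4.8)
The plan is to prove the statement by induction on the operation index $n$, with an inner induction on the second argument $b$ for each fixed $n$. The base case $n = 1$ is immediate, since $\langle a, b\rangle_1 = a + b = [a,b]_1$ by definition (ordinal addition restricts to natural addition on $\N$). The crux is the inductive step: assuming $\langle a, b\rangle_n = [a,b]_n$ for all $a, b \in \N$, I want to show $\langle a, b\rangle_{n+1} = [a,b]_{n+1}$ for all $a, b \in \N$.

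For the inner induction on $b$, I would first handle the base case $b = 0$, where $k = 0$ in the CNF of $\beta = 0$. Here Definition~\ref{prop} gives $\langle a, 0\rangle_{n+1} = 0$ when $n+1 = 2$ and $\langle a, 0\rangle_{n+1} = 1$ when $n+1 \geq 3$, matching exactly the base clauses of Definition~\ref{hypop} for $[a,0]_{n+1}$. The key observation driving the successor step is that for a \emph{finite} ordinal $b > 0$, the Cantor Normal Form is $b = \omega^0 + \cdots + \omega^0$ ($b$ copies), as noted in the Example; hence $\beta_H = \omega^0 = 1$ and the tail $\beta_T$ is the finite ordinal $b - 1 = \dot b$. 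Thus for $b + 1 \geq 2$ (i.e. $k \geq 2$) the recursive clause yields
\[
\langle a, b+1\rangle_{n+1} = \langle \langle a, 1\rangle_{n+1},\ \langle a, b\rangle_{n+1} \rangle_n = \langle a,\ \langle a, b\rangle_{n+1}\rangle_n,
\]
where I have used the $k=1,\ \beta=1$ clause to simplify $\langle a, 1\rangle_{n+1} = a$. Applying the outer induction hypothesis (for index $n$) and then the inner hypothesis (for $b$), this equals $[a, [a,b]_{n+1}]_n = [a, b+1]_{n+1}$ by Definition~\ref{hypop}.

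There is one genuinely special case to dispatch separately: $b + 1 = 1$, i.e. $b = 0$, which falls under $k = 1,\ \beta = 1$ rather than $k \geq 2$. But this is exactly the base case already treated, or alternatively handled directly by the clause giving $\langle a, 1\rangle_{n+1} = a = [a,1]_{n+1}$ (the latter equality following from the base clauses and one application of the recursion in Definition~\ref{hypop}). I should be careful to reconcile the bookkeeping between ``$b = 0$ as base case'' and ``$b = 0$ giving $\beta = 0$ with $k = 0$'' versus ``$b + 1 = 1$ giving $k = 1$'': the cleanest organization is to verify $b = 0$ and $b = 1$ directly, then run the successor step only for $b + 1 \geq 2$.

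The main obstacle I anticipate is not any deep difficulty but rather the need to keep the two arguments of the recursion finite so that the CNF decomposition behaves as described. Since both $a$ and the intermediate value $\langle a, b\rangle_{n+1}$ remain natural numbers throughout (the hyperoperations are closed on $\N$, and by the inner hypothesis the ordinal operation agrees with them), the limit-ordinal clause of Definition~\ref{prop} never triggers, and every invocation lands in the finite, purely syntactic cases. I would state this closure explicitly as an auxiliary remark or fold it into the induction, ensuring that $\langle a, b\rangle_{n+1} \in \N$ before invoking the inner hypothesis on it.
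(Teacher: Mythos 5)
Your proposal is correct and follows essentially the same route as the paper's proof: an outer induction on $n$, an inner induction on $b$ that verifies $b=0$ and $b=1$ directly, and a successor step for $b+1\geq 2$ using the CNF decomposition $b+1 \stackrel{\textsc{cnf}}{=} 1+b$ together with both induction hypotheses. Your explicit remark that the intermediate values $\langle a, b\rangle_{n+1}$ stay in $\N$ (so the outer hypothesis applies to them) is a point the paper leaves implicit, but it is the same argument.
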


\begin{proof}
    Let $a\in\omega$ be given. We prove the statement by simple induction on $n$.
    \begin{enumerate}
        \item[(i)] For $n=0$, it is immediate from Definition~\ref{hypop} and the definition of $\op--_0$.
        \item[(ii)] For some $n\in\omega$, suppose $\langle a, b \rangle_n = [a, b]_n$ for all $b\in\omega$. We show that $\op ab_{n+1} = [a, b]_{n+1}$ for all $b \in \omega$ by a certain form of induction on $b$.
        \begin{enumerate}
            \item[(1)] Let $b = 0$. For $n = 0$, we have $\op a0_1 = 0 = [a, 0]_1$. Otherwise, for $n \geq 1$, we have $n+1 \geq 2$, and thus $\op a0_{n+1} = 1 = [a, 0]_{n+1}$.
            \item[(2)] For $b = 1$, we have $\op a1_{n+1} = a = [a, 1]_{n+1}$.
            \item[(3)] Let $b\in\omega$ with $b\geq1$, and suppose $\op ab_{n+1} = [a, b]_{n+1}$. We then have $b+1 \geq 2$, and thus $b+1 = 1+b$ in terms of its head and tail. Therefore,
            \begin{align*}
                \op a{b+1}_{n+1} &= \op{\op a1_{n+1}}{\op ab_{n+1}}_n && \\
                &= \op a{\op ab_{n+1}}_n && \\
                &= [a, [a, b]_{n+1}]_n && \text{by both induction hypotheses} \\
                &= [a, b+1]_{n+1} && \text{by definition.}
            \end{align*}
        \end{enumerate}
    \end{enumerate}
\end{proof}

Secondly, focusing on the operations of ordinal arithmetic, we show that these correspond to the first three terms in our sequence $\op--_n$.

\begin{theorem}
    For all $\alpha, \beta \in \ord$, the following hold.
    \begin{enumerate}
        \item[\textnormal{(i)}] $\op\alpha\beta_0 = \alpha + \beta$
        \item[\textnormal{(ii)}] $\op\alpha\beta_1 = \alpha \cdot \beta$
        \item[\textnormal{(iii)}] $\op\alpha\beta_2 = \alpha^\beta$
    \end{enumerate}
\end{theorem}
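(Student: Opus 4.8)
The plan is to handle the three parts in order, since part (iii) reuses part (ii). Part (i) needs no work: the clause $n=1$ of Definition~\ref{prop} reads $\langle\alpha,\beta\rangle_1 = \alpha+\beta$ verbatim. For parts (ii) and (iii) I would fix $\alpha$ and argue by transfinite induction on $\beta$, splitting into cases exactly along the clauses of Definition~\ref{prop}. The structural fact I would record first is that, when $\beta \stackrel{\textsc{cnf}}{=} \beta_H + \beta_T$ with $k\geq 2$, both $\beta_H = \omega^{\beta_1}$ and $\beta_T$ are strictly below $\beta$: we have $\beta_H < \beta$ since $\beta_T > 0$, and $\beta_T < \beta$ since the CNF exponents are weakly decreasing ($\beta_1 \ge \beta_2$), so $\omega^{\beta_1}$ is not absorbed into $\beta_T$ and $\omega^{\beta_1} + \beta_T > \beta_T$. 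This licenses applying the induction hypothesis to the head and tail.

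For part (ii) the clauses go as follows. The cases $\beta = 0$ and $\beta = 1$ match $\alpha\cdot 0 = 0$ and $\alpha\cdot 1 = \alpha$ directly. When $k=1$ and $\beta>1$, then $\beta = \omega^{\beta_1}$ with $\beta_1 > 0$ is a limit, so $\langle\alpha,\beta\rangle_2 = \sup\{\langle\alpha,\gamma\rangle_2 : \gamma<\beta\} = \sup\{\alpha\cdot\gamma : \gamma<\beta\} = \alpha\cdot\beta$ by the induction hypothesis and the limit clause of ordinal multiplication. When $k\ge 2$, I unfold $\langle\alpha,\beta\rangle_2 = \langle\langle\alpha,\beta_H\rangle_2,\langle\alpha,\beta_T\rangle_2\rangle_1 = \langle\alpha,\beta_H\rangle_2 + \langle\alpha,\beta_T\rangle_2 = \alpha\cdot\beta_H + \alpha\cdot\beta_T$ and appeal to left distributivity of ordinal multiplication, $\alpha\cdot(\beta_H + \beta_T) = \alpha\cdot\beta_H + \alpha\cdot\beta_T$, to get $\alpha\cdot\beta$. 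Note that this clause silently absorbs multi-term limit ordinals, which the standard definition would treat by a supremum; the induction guarantees the recursive value agrees with that supremum, so no separate reconciliation is needed.

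Part (iii) follows the same template, now invoking part (ii) to read the inner $\langle-,-\rangle_2$ as multiplication. The base and limit clauses are as before, using $\alpha^0 = 1$, $\alpha^1 = \alpha$, and the limit clause of ordinal exponentiation. In the $k\ge 2$ clause I obtain $\langle\alpha,\beta\rangle_3 = \langle\langle\alpha,\beta_H\rangle_3,\langle\alpha,\beta_T\rangle_3\rangle_2 = \langle\alpha,\beta_H\rangle_3\cdot\langle\alpha,\beta_T\rangle_3 = \alpha^{\beta_H}\cdot\alpha^{\beta_T}$, and I want this to equal $\alpha^{\beta_H + \beta_T} = \alpha^\beta$.

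The hard part will be this last equality when $\alpha = 0$. For $\alpha \ge 1$ the exponent-addition law $\alpha^{\mu+\nu} = \alpha^\mu\cdot\alpha^\nu$ holds unconditionally and closes the case at once. For $\alpha = 0$, however, the paper's non-standard convention $0^\lambda = 1$ at limits $\lambda$ breaks the general law (for instance $0^{1+\omega} = 1 \ne 0 = 0^1\cdot 0^\omega$), so I would verify the required instance $0^{\beta_H}\cdot 0^{\beta_T} = 0^\beta$ directly. If $\beta_1 = 0$ then all CNF exponents vanish, $\beta$ is finite and at least $2$, and both sides are $0$. If $\beta_1 > 0$ then $\beta_H = \omega^{\beta_1}$ is a limit and $0^{\beta_H} = 1$, so the left side collapses to $0^{\beta_T}$; since $\beta$ and $\beta_T$ share their least exponent $\beta_k$, they are successors together (when $\beta_k = 0$, both sides $0$) or limits together (when $\beta_k > 0$, both sides $1$), giving $0^{\beta_T} = 0^\beta$. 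This reconciles the recursion with the non-standard exponentiation and completes the induction.
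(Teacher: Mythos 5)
Your proof takes the same route as the paper's: fix $\alpha$, induct transfinitely on $\beta$ along the clauses of Definition~\ref{prop}, close the head--tail case of (ii) by left distributivity of ordinal multiplication, and close that of (iii) by the exponent-sum law, with (iii) reusing (ii). Where you differ is in two points of added rigor, and both are to your credit rather than errors. First, you explicitly prove $\beta_H, \beta_T < \beta$ (the paper merely asserts this in passing), which is what licenses the inductive appeals. Second, and more substantively, your separate treatment of $\alpha = 0$ in (iii) patches a step the paper glosses over: the paper invokes $\alpha^{\beta_H}\cdot\alpha^{\beta_T} = \alpha^{\beta_H+\beta_T}$ unconditionally, but under its own convention $0^\lambda = 1$ for limit $\lambda$ (acknowledged in the paper's footnote) this law is false in general for $\alpha = 0$, e.g.\ $0^1\cdot 0^\omega = 0 \neq 1 = 0^{1+\omega}$. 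Your direct verification that the needed head--tail instances still hold --- both sides equal $0$ when the least CNF exponent $\beta_k$ is $0$, and the law reduces to $0^{\beta_T} = 0^{\beta}$ via $0^{\beta_H} = 1$ when $\beta_1 > 0$, with $\beta$ and $\beta_T$ being successors or limits together --- is exactly what is required to make the induction close, and is an improvement over the paper's own write-up.
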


\begin{proof}
    Let $\alpha \in \ord$ be given. We prove statements (i)--(iii) in order.
    \begin{enumerate}
        \item[(i)] Immediate.
        \item[(ii)] We proceed by a form of transfinite induction on $\beta$, splitting into cases according to Definition~\ref{def:fold}.
        \begin{enumerate}
            \item[(1)] For $\beta = 0$, we have $\op\alpha0_1 = 0 = \alpha\cdot0$.
            \item[(2)] Write $\beta = \beta_H + \beta_T > 0$ . For $\beta = 1$, we have $\op\alpha1_1 = \alpha = \alpha\cdot1$.
            \item[(3)] Let $\beta > 1$ be additive principal, and suppose $\op\alpha\gamma_1 = \alpha \cdot \gamma$ for all $\gamma < \beta$. We have
                \begin{align*}
                    \op\alpha\beta_1 &= \textstyle\lim_{\gamma < \beta} {\op\alpha\gamma_1} && \\
                    &= \textstyle\lim_{\gamma < \beta} {(\alpha \cdot \gamma)} && \text{by assumption} \\
                    &= \alpha \cdot \beta && \text{by definition.}
                \end{align*}
            \item[(4)] Write $\beta = \beta_H + \beta_T$. Suppose $\op\alpha\gamma_1 = \alpha \cdot \gamma$ for all $\gamma < \beta$. We have
                \begin{align*}
                    \op\alpha\beta_1 &= \op{\op\alpha{\beta_H}_1}{\op\alpha{\beta_T}_1}_0 && \\
                    &= \op\alpha{\beta_H}_1 + \op\alpha{\beta_T}_1 && \text{by (i)} \\
                    &= (\alpha \cdot \beta_H) + (\alpha \cdot \beta_T) && \text{since } \beta_H, \beta_T < \beta \\
                    &= \alpha \cdot (\beta_H + \beta_T) && \\
                    &= \alpha \cdot \beta. &&
                \end{align*}
        \end{enumerate}
        \item[(iii)] We proceed by transfinite induction on $\beta$.
        \begin{enumerate}
            \item[(1)] For $\beta = 0$, we have $\op\alpha0_2 = 1 = \alpha^0$ (in particular, $0^0 = 1$).
            \item[(2)] For $\beta = 1$, we have $\op\alpha1_2 = \alpha = \alpha^1$.
            \item[(3)] Let $\beta > 1$ be additive principal, and suppose $\op\alpha\gamma_2 = \alpha^\gamma$ for all $\gamma < \beta$. We have
                \begin{align*}
                    \op\alpha\beta_2 &= \textstyle\lim_{\gamma < \beta} {\op\alpha\gamma_2} && \\
                    &= \textstyle\lim_{\gamma < \beta} \alpha^\gamma && \text{by assumption} \\
                    &= \alpha^\beta && \text{by definition.}
                \end{align*}
            \item[(4)] Write $\beta = \beta_H + \beta_T$. Suppose $\op\alpha\gamma_2 = \alpha^\gamma$ for all $\gamma < \beta$. We have
                \begin{align*}
                    \op\alpha\beta_2 &= \op{\op\alpha{\beta_H}_2}{\op\alpha{\beta_T}_2}_1 && \\
                    &= \op\alpha{\beta_H}_2 \cdot \op\alpha{\beta_T}_2 && \text{by (ii)} \\
                    &= \alpha^{\beta_H} \cdot \alpha^{\beta_T} && \text{since } \beta_H, \beta_T < \beta \\
                    &= \alpha^{\beta_H + \beta_T} && \\
                    &= \alpha^\beta. &&
                \end{align*}
        \end{enumerate}
    \end{enumerate}
\end{proof}

In essence, the distributive definition of our operations manifests itself as special cases of the identities $\alpha \cdot (\beta + \gamma) = (\alpha\cdot\beta) + (\alpha\cdot\gamma)$ and $\alpha^{\beta+\gamma} = \alpha^\beta \cdot \alpha^\gamma$. Through this formulation, we can ensure that iteration is right-associative (as with the hyperoperations) while still being meaningful in the transfinite case (as with ordinal arithmetic).

\printbibliography

\end{document}